\tikzstyle{vertex}=[circle, draw, inner sep=1pt, minimum size=4pt]
\newcommand{\vertex}{\node[vertex]}
\newcommand{\noi}{\noindent}
\newcommand{\N}{\mathbb{N}}
\newtheorem{theorem}{Theorem}[section]
\newtheorem{definition}{Definition}[section]
\newtheorem{lemma}[theorem]{Lemma}
\newtheorem{proposition}[theorem]{Proposition}
\newtheorem{corollary}[theorem]{Corollary}
\newtheorem{problem}{Problem}
\title{\textbf{\sc Some Properties of Fibonacci-Sum Set-Graphs}}
\author{Eunice Gogo Mphako-Banda}
\affil{\small School of Mathematical Sciences\\ University of Witswatersrand \\Johannesburg, South Africa.\\{\tt eunice.mphako-banda@wits.ac.za}}
\author{Johan Kok}
\affil{\small Centre for Studies in Discrete Mathematics\\ Vidya Academy of Science \& Technology \\Thalakkottukara, Thrissur, Kerala, India.\\{\tt $^\ast$kokkiek2@tshwane.gov.za, }}
\author{Sudev Naduvath\footnote{Corresponding Author}}
\affil{\small Centre for Studies in Discrete Mathematics\\ Vidya Academy of Science \& Technology \\Thalakkottukara, Thrissur, Kerala, India.\\{\tt sudevnk@gmail.com}}
\date{}
\begin{document}
\maketitle

\begin{abstract}
\noindent In this paper we study some properties of Fibonacci-sum set-graphs. The aforesaid graphs are an extension of the notion of Fibonacci-sum graphs to the notion of set-graphs. The colouring of Fibonacci-sum graphs (not set-graphs) is also discussed. A number of challenging research problems are posed in the conclusion.
\end{abstract}

\textbf{Keywords:} Set-graph, Fibonacci-sum set-graph.
\vspace{0.35cm}

\textbf{Mathematics Subject Classification 2010:} 05C15, 05C38, 05C75, 05C85.

\section{Introduction}

For general notation and concepts in graphs and digraphs see \cite{BM1,FH,DBW}. Unless stated otherwise, all graphs will be finite connected graphs with multiple edges and loops are permitted. The number of multiple (or single) edges between vertices $u$ and $v$ is denoted by $\epsilon(u,v)$. Therefore, $\epsilon(u,v) \geq 0$. Note that $\epsilon(u,v)=0$ implies that the vertices $v,u$ are non-adjacent. For a vertex $v$ its number of loops is denoted, $l(v)$. The \textit{open neighbourhood} of a vertex $v$, denoted by $N(v)$, is the set of vertices which are adjacent to $v$. The degree of a vertex $v \in V(G)$ is denoted $d_G(v)$ or when the context is clear, simply as $d(v)$.  The degree of a vertex $v$ is given by $d(v)= 2l(v)+\epsilon(v,u)$, where $u\in N(v)$. 

Recall that the \textit{sequence of Fibonacci numbers} $\mathcal{F} =\{f_n\}_{n\geq 0},\ n\in \N_0$ is defined recursively as $f_0=0$, $f_1=1$ and $f_n=f_{n-1}+f_{n-2}$. As defined in \cite{FKM}, a \textit{Fibonacci-sum graph} is defined for a finite set of the first $n$ consecutive positive integers $\{1,2,3,\ldots,n\}$ as $G^F_n$ with $V(G^F_n)=\{v_i:1\leq i\leq n\}$ and $E(G^F_n)=\{v_iv_j:i\neq j, i+j \in \mathcal{F}\}$. 

In this paper, we introduce the notion of a new class of graphs, namely the Fibonacci-sum set-graphs and study some fundamental structural characteristics of this graph class. Fibonacci-sum set-graphs are an extension of the notion of Fibonacci-sum graphs to the notion of set-graphs.

\section{Derivative Set-graphs}

\noi The notion of a set-graph was introduced in \cite{KCSS} as explained below.

\begin{definition}\label{Defn-2.1}{\rm 
\cite{KCSS} Let $A^{(n)} = \{a_1,a_2,a_3,\dots , a_n\}$, $ n \in \N$ be a non-empty set and the $i$-th $s$-element subset of $A^{(n)}$ be denoted by $A^{(n)}_{s,i}$. Now, consider $\mathcal S = \{A^{(n)}_{s,i}: A^{(n)}_{s,i} \subseteq A^{(n)}, A^{(n)}_{s,i} \neq \emptyset \}$. The \textit{set-graph} corresponding to set $A^{(n)}$, denoted $G_{A^{(n)}}$, is defined to be the graph with $V(G_{A^{(n)}}) = \{v_{s,i}: A^{(n)}_{s,i} \in \mathcal S\}$ and $E(G_{A^{(n)}}) = \{v_{s,i}v_{t,j}: A^{(n)}_{s,i} \cap A^{(n)}_{t,j} \neq \emptyset\}$, where $s\neq t$ or $i\neq j$.
}\end{definition}

\noi Note that the definition of vertices implies, $v_{s,i} \mapsto A^{(n)}_{s,i} \in \mathcal S$.

\begin{figure}[h!]
\centering
\begin{tikzpicture}[auto,node distance=1.75cm,
thick,main node/.style={circle,draw,font=\sffamily\Large\bfseries}]
\vertex (v1) at (0:3) [fill,label=right:$v_{1,1}$]{};
\vertex (v2) at (308.57:3) [fill,label=right:$v_{1,2}$]{};
\vertex (v3) at (257.14:3) [fill,label=below:$v_{1,3}$]{};
\vertex (v4) at (205.71:3) [fill,label=below:$v_{2,1}$]{};
\vertex (v5) at (154.28:3) [fill,label=left:$v_{2,2}$]{};
\vertex (v6) at (102.85:3) [fill,label=left:$v_{2,3}$]{};
\vertex (v7) at (51.42:3) [fill,label=above:$v_{3,1}$]{};
\path 
(v1) edge (v4)
(v1) edge (v5)
(v1) edge (v7)
(v2) edge (v4)
(v2) edge (v6)
(v2) edge (v7)
(v3) edge (v5)
(v3) edge (v6)
(v3) edge (v7)
(v4) edge (v5)
(v4) edge (v6)
(v4) edge (v7)
(v5) edge (v6)
(v5) edge (v7)
(v6) edge (v7)
;
\end{tikzpicture}
\caption{\small The set-graph $G_{A^{(3)}}$.}\label{fig:fig-1}
\end{figure}
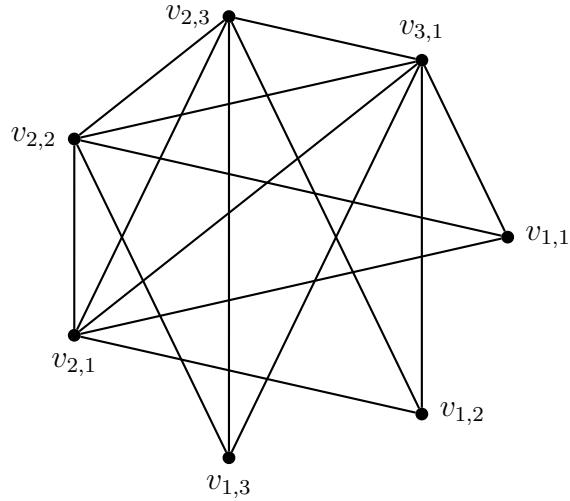 

\subsection{Fibonacci-sum set-graph}

Derivative set-graphs are obtained by considering well-defined sets of positive integers together with number theoretical operators or conditions between the integers or the vertices corresponding to the subsets. For colouring related results in respect of derivative set-graphs see \cite{KS1}. 

\noi The notion of a Fibonacci-sum set-graph can be introduced as follows.

\begin{definition}\label{Defn-2.2}{\rm
Let $A^{(n)} = \{1,2,3,\ldots, n\},\ n \in \N$ be a non-empty set and the $i$-th $s$-element subset of $A^{(n)}$ be denoted by $A^{(n)}_{s,i}$. Now, consider $\mathcal{S}=\{A^{(n)}_{s,i}: A^{(n)}_{s,i} \subseteq A^{(n)}, A^{(n)}_{s,i} \neq \emptyset \}$. The \textit{Fibonacci-sum set-graph} corresponding to set $A^{(n)}$, denoted $G^F_{A^{(n)}}$, is defined to be the graph with $V(G^F_{A^{(n)}})=\{v_{s,i}: A^{(n)}_{s,i} \in \mathcal{S}\}$ and $E(G^F_{A^{(n)}}) = \{v_{s,i}v_{t,j}: \forall (i',j'), i'\in A^{(n)}_{s,i}, j'\in A^{(n)}_{t,j},i' \neq j' \mbox{and the sum}\ i'+ j' \in \mathcal{F}\}$.
}\end{definition}

Since $A^{(n)}_{s,i}$ and $A^{(n)}_{t,j}$ are not necessarily distinct, it follows that loops are permitted. Figure \ref{fig:fig-2} depicts the Fibonacci-sum set-graph $G^F_{A^{(3)}}$. Significant differences exist between the Fibonacci-sum graph $G^F_3 \cong P_3$, the set-graph $G_{A^{(3)}}$ and the Fibonacci-sum set-graph $G^F_{A^{(3)}}$. Evidently, $\nu(G^F_n)\leq \nu(G_{A^{(n)}})=\nu(G^F_{A^{(n)}})$ and $\varepsilon(G^F_n)\leq \varepsilon(G_{A^{(n)}})\leq\varepsilon(G^F_{A^{(n)}})$ with equality throughout only for $n=1$.

\begin{figure}[h!]
\centering
\begin{tikzpicture}[auto,node distance=1.75cm,
thick,main node/.style={circle,draw,font=\sffamily\Large\bfseries}]
\vertex (v1) at (51.42:4) [fill,label=right:$v_{3,1}$]{};
\vertex (v2) at (0:4) [fill,label=right:$v_{1,1}$]{};
\vertex (v3) at (308.57:4) [fill,label=below:$v_{1,2}$]{};
\vertex (v4) at (257.14:4) [fill,label=below:$v_{1,3}$]{};
\vertex (v5) at (205.71:4) [fill,label=left:$v_{2,1}$]{};
\vertex (v6) at (154.28:4) [fill,label=left:$v_{2,2}$]{};
\vertex (v7) at (102.85:4) [fill,label=above:$v_{2,3}$]{};
\path 
(v1) edge [bend left=10] (v2)
(v1) edge [bend right=10] (v2)
(v1) edge [bend left=10] (v3)
(v1) edge [bend right=10] (v3)
(v1) edge (v4)
(v1) edge [bend left=10] (v5)
(v1) edge [bend left=17.5] (v5)
(v1) edge (v5)
(v1) edge [bend right=10] (v5)
(v1) edge [bend right=10] (v6)
(v1) edge (v6)
(v1) edge [bend left=10] (v6)
(v1) edge (v7)
(v1) edge [bend left=10] (v7)
(v1) edge [bend right=10] (v7)
(v1) edge [loop right,in=50,out=100,looseness=55] (v1)
(v1) edge [loop right,in=30,out=120,looseness=55] (v1)
(v2) edge (v3)
(v2) edge [bend left=10]  (v5)
(v2) edge [bend right=10] (v5)
(v2) edge (v6)
(v2) edge (v7)
(v3) edge (v4)
(v3) edge (v5)
(v3) edge [bend left=10] (v6)
(v3) edge [bend right=10] (v6)
(v3) edge (v7)
(v4) edge (v5)
(v4) edge (v7)
(v5) edge [bend left=10] (v6)
(v5) edge (v6)
(v5) edge [bend right=10] (v6)
(v5) edge [bend left=10] (v7)
(v5) edge (v7)
(v5) edge [loop left,in=160,out=225,looseness=55] (v5)

(v6) edge [bend left=10] (v7)
(v6) edge [bend right=10] (v7)
(v7) edge [loop left,in=45,out=140,looseness=55] (v7)
;
\end{tikzpicture}
\caption{Fibonacci-sum set-graph $G^F_{A^{(3)}}$.}\label{fig:fig-2}
\end{figure}
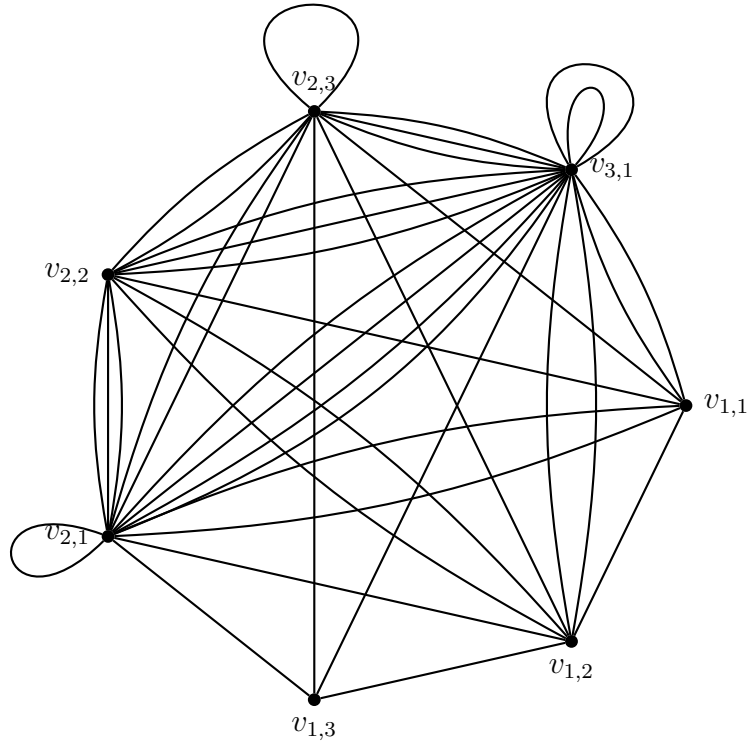 

\begin{theorem}\label{Thm-2.1}
For $n\in \N$, the Fibonacci-sum set-graph has no pendant vertices.
\end{theorem}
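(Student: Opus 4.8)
The plan is to prove the stronger statement that the minimum degree of $G^F_{A^{(n)}}$ is at least $2$ whenever $n\ge 2$, while $G^F_{A^{(1)}}$ consists of the single vertex $v_{1,1}$ and is therefore trivially free of pendant vertices. Since a pendant vertex is precisely a vertex of degree $1$, and since a loop contributes $2$ to $d(v)=2l(v)+\sum_{u\in N(v)}\epsilon(v,u)$, it suffices to show that every vertex either carries a loop or is joined to at least two distinct vertices. Before doing this I would fix the reading of Definition~\ref{Defn-2.2} that is consistent with Figure~\ref{fig:fig-2}: a loop at $v_{s,i}$ arises from a pair of \emph{distinct} elements $a,b\in A^{(n)}_{s,i}$ with $a+b\in\mathcal{F}$, while $\epsilon(v_{s,i},v_{t,j})\ge 1$ as soon as there exist $a\in A^{(n)}_{s,i}$ and $b\in A^{(n)}_{t,j}$ with $a\ne b$ and $a+b\in\mathcal{F}$. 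Only these two interpretation-robust facts are needed below.

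The engine of the proof is the following number-theoretic lemma: for every $n\ge 2$ and every $x\in A^{(n)}$ there is a \emph{Fibonacci partner} $y\in A^{(n)}$ with $y\ne x$ and $x+y\in\mathcal{F}$. For $x=1$ one takes $y=2$, since $1+2=3\in\mathcal{F}$ and $2\le n$. For $x\ge 2$, let $f$ be the least Fibonacci number exceeding $x$; because consecutive Fibonacci numbers satisfy $f_{k+1}=f_k+f_{k-1}\le 2f_k$, one gets $f\le 2x$, so that $y:=f-x$ lies in $\{1,\dots,x\}\subseteq A^{(n)}$, and a short argument excludes the degenerate possibility $f=2x$, giving $y\ne x$. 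This is the step I expect to be the most delicate, since it is where the arithmetic of $\mathcal{F}$ enters and where the boundary values ($x=1$ and $x=n$) must be checked.

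With the lemma in hand the theorem follows by a dichotomy. Fix a vertex $v_{s,i}$, write $S=A^{(n)}_{s,i}$, pick any $x\in S$ and let $y$ be a Fibonacci partner of $x$. If $y\in S$, then $\{x,y\}$ is a pair of distinct elements of $S$ with $x+y\in\mathcal{F}$, so $v_{s,i}$ carries a loop and $d(v_{s,i})\ge 2$. If $y\notin S$, then each of the $2^{n-1}$ subsets of $A^{(n)}$ that contain $y$ is a vertex different from $v_{s,i}$ (as $y\notin S$) and is joined to $v_{s,i}$ by at least the edge produced by the pair $(x,y)$; since $2^{n-1}\ge 2$ this already yields $d(v_{s,i})\ge 2$. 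Hence the minimum degree of $G^F_{A^{(n)}}$ is at least $2$ for $n\ge 2$, so no vertex has degree $1$, and together with the trivial case $n=1$ this shows $G^F_{A^{(n)}}$ has no pendant vertices.

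The main obstacle, beyond settling the multiplicity convention, is the partner lemma: one must be sure that the partner $y$ can always be chosen distinct from $x$ and within $A^{(n)}$, which relies on the fact that the gap to the next Fibonacci term never forces it past twice the current integer. Once this is secured, the set-theoretic blow-up (every element lies in $2^{n-1}$ subsets) does all the remaining work and in fact delivers the stronger conclusion that the minimum degree is at least $2$, rather than merely the absence of degree-$1$ vertices.
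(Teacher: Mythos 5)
Your proposal is correct, but it takes a genuinely different route from the paper. The paper argues by induction on $n$: it builds $G^F_{A^{(k+1)}}$ from two copies of $G^F_{A^{(k)}}$ (the subsets not containing $k+1$ and those containing it), claims the inherited vertices keep at least two neighbours after the inter-copy edges are added, and then treats only the genuinely new vertex $v_{1,k+1}\mapsto\{k+1\}$ by exhibiting a Fibonacci partner for $k+1$ --- splitting into the cases where $k+1$ is or is not a Fibonacci number. You instead prove the partner fact \emph{uniformly}: for every $x\in A^{(n)}$ with $n\ge 2$ there is $y\ne x$ in $A^{(n)}$ with $x+y\in\mathcal{F}$, using $f_{k+1}=f_k+f_{k-1}<2x$ for the least Fibonacci number exceeding $x\ge 2$, and then run a clean dichotomy on whether the partner lies inside or outside the vertex's underlying set, with the count $2^{n-1}\ge 2$ of supersets of $y$ finishing the second case. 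This buys you a non-inductive argument, the stronger conclusion $\delta(G^F_{A^{(n)}})\ge 2$ for $n\ge 2$, and --- notably --- it repairs a soft spot in the paper's Step~2, which asserts that every vertex of $G_1$ already has two neighbours even though the inductive hypothesis as stated only excludes degree~$1$. The only caveat is that both your argument and the paper's depend on a chosen reading of Definition~\ref{Defn-2.2} (which pairs generate edges and loops); you state your reading explicitly and verify it against Figure~\ref{fig:fig-2}, which is the right thing to do.
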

\begin{proof}
The result can easily be verified for $n=1,2$. Note that an isolated vertex is not a pendant vertex. Now assume that it holds for $n=k\geq 3$. The Fibonacci-sum set-graph is, at least partially, constructed as follows:

\textit{Step 1:} Take two copies of $G^F_{A^{(n)}}$ and label them $G_1$ and $G_2$. Define a map $g:v_{s,i}\mapsto v'_{s,i}$, where $v_{s,i}\in V(G_1)$, $v'_{s,i} \in V(G_2)$ and $A^{{(n)}'}_{s,i}= A^{(n)}_{s,i} \cup \{k+1\}$.

\textit{Step 2:} Add all edges between $G_1$ and $G_2$ corresponding to Definition \ref{Defn-2.2}. Hence, all loops at vertex $v_{s,i}$, if any, correspond to multiple edges between vertices $v_{s,i}$ and $v'_{s,i}$. Furthermore, since each $v_{s,i} \in V(G_1)$ has at least two neighbours, say $v_{t,j}$, $v_{m,q}$, in $G_1$, it follows that $v_{s,i}$ is adjacent to at least $v'_{t,j}$, $v'_{m,q}$ in $G_2$. So, connectivity is ensured with no pendant vertices resulting thus far.

\textit{Step 3(a):} Consider the vertex $v_{1,k+1}$ corresponding to the subset $\{k+1\}$. If $k+1$ is a Fibonacci number, say $f_\ell$, then $f_{\ell-1}$ exists in the finite set $A^{(n)}$ implying that at least two distinct subsets exist which contains $f_{\ell-1}$. Hence, $d(v_{1,k+1})\geq 2$ in $G^F_{A^{(n)}}$. Hence, the result holds for all $n\in \N$ and $n$ itself is a Fibonacci number.

\textit{Step 3(b):} If $k+1$ is not a Fibonacci number, then a largest Fibonacci number, say $f_ell$ belongs to $A^{(n)}$. Also, $f_{\ell+1} \notin A^{(n)}$. However, $\ell' = f_{\ell+1}-(k+1) \in A^{(n)}$. Similarly, at least two distinct subsets exist, each containing $\ell'$. Since $\ell' + (k+1) = f_{\ell+1}$, vertex $v_{1,k+1}$ has $d(v_{1,k+1})\geq 2$ in $G^F_{A^{(n)}}$ as well. Hence, the result holds for all $n\in \N$ and $n$ itself is a non-Fibonacci integer.

Hence, by mathematical induction together with the well-ordering property of the set of positive integers, the result follows for all $n\in \N$.
\end{proof}

The following two corollaries are the direct consequences of Theorem \ref{Thm-2.1}. 

\begin{corollary}\label{Cor-2.2}
For $n\geq 1$, the Fibonacci-sum set-graph is connected.
\end{corollary}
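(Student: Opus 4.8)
The plan is to prove connectivity by induction on $n$, reusing verbatim the recursive construction of $G^F_{A^{(k+1)}}$ from two copies of $G^F_{A^{(k)}}$ that was set up in the proof of Theorem~\ref{Thm-2.1}. First I would dispose of the base cases $n=1,2$ (and $n=3$ from Figure~\ref{fig:fig-2}) by inspection, since each of these graphs is visibly connected. For the inductive step I would assume $G^F_{A^{(k)}}$ is connected and analyse the three parts into which $V(G^F_{A^{(k+1)}})$ splits: the copy $G_1\cong G^F_{A^{(k)}}$ on the subsets of $\{1,\dots,k\}$, the copy $G_2$ on the translated subsets $A^{(k)}_{s,i}\cup\{k+1\}$, and the single extra vertex $v_{1,k+1}$ corresponding to the singleton $\{k+1\}$.

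The core of the argument is to show that $G_1$, $G_2$, and $v_{1,k+1}$ all lie in one component. By the inductive hypothesis $G_1$ is connected. For $G_2$ I would observe that every Fibonacci-sum edge of $G_1$ is inherited by $G_2$: if $i'\in A^{(k)}_{s,i}$ and $j'\in A^{(k)}_{t,j}$ satisfy $i'+j'\in\mathcal{F}$, then, since $A^{(k)}_{s,i}\subseteq A^{(k)}_{s,i}\cup\{k+1\}$, the same pair witnesses the edge $v'_{s,i}v'_{t,j}$; hence $G_2$ contains a spanning subgraph isomorphic to $G_1$ and is therefore connected. Next I would weld $G_1$ to $G_2$ using the cross-edges of Step~2: each $v_{s,i}\in V(G_1)$ has (by Theorem~\ref{Thm-2.1} together with the accompanying remark that every vertex has at least two neighbours) a neighbour $v_{t,j}$ in $G_1$, and the same Fibonacci pair then yields the edge $v_{s,i}v'_{t,j}$ to $G_2$. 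Thus $G_1\cup G_2$ together with these cross-edges forms a single connected component.

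Finally I would attach the remaining vertex $v_{1,k+1}$. By Step~3 of the proof of Theorem~\ref{Thm-2.1}, there is an element $\ell'\le k$ with $\ell'+(k+1)\in\mathcal{F}$ that belongs to at least two distinct subsets of $A^{(k)}$; the vertices of those subsets lie in $G_1$ (or $G_2$), i.e.\ in the component already built, and $v_{1,k+1}$ is adjacent to them. Hence $v_{1,k+1}$ joins the single component, so $G^F_{A^{(k+1)}}$ is connected, completing the induction; the well-ordering of $\N$ then delivers the claim for every $n\ge 1$.

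I expect the main obstacle to be the bookkeeping in the inductive step rather than any deep idea: one must verify that the three pieces exhaust $V(G^F_{A^{(k+1)}})$, that the inherited-edge argument for $G_2$ really relies only on containment of the underlying sets, and that the neighbours guaranteed by Step~3 for $v_{1,k+1}$ genuinely lie in the already-connected part rather than forming a separate island. Once these inclusions are pinned down, connectivity follows immediately.
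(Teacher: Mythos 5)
Your proposal is correct and follows essentially the same route as the paper: the paper offers no separate argument for Corollary~\ref{Cor-2.2}, declaring it a direct consequence of Theorem~\ref{Thm-2.1}, whose proof already asserts connectivity via exactly the decomposition you use (the two copies $G_1$, $G_2$ joined by cross-edges in Step~2, with the singleton vertex $v_{1,k+1}$ attached in Step~3). Your version merely makes explicit the bookkeeping (that the three pieces exhaust the vertex set and that $G_2$ inherits a spanning connected subgraph) that the paper leaves implicit.
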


\begin{corollary}\label{Cor-2.3}
For $n\geq 1$, the Fibonacci-sum set-graph is Hamiltonian.
\end{corollary}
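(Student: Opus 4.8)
The plan is to prove that $G^F_{A^{(n)}}$ is Hamiltonian by exhibiting a Hamiltonian cycle, building on the structural information already established. The natural approach is induction on $n$, mirroring the two-copy construction used in the proof of Theorem~\ref{Thm-2.1}. First I would verify the base cases $n=1,2,3$ directly; for $n=3$ the seven vertices of Figure~\ref{fig:fig-2} admit an explicit Hamiltonian cycle that can be read off the drawing, so this is routine. I would then assume that $G^F_{A^{(k)}}$ has a Hamiltonian cycle $C_k = u_1 u_2 \cdots u_N u_1$ for some $k\geq 3$, where $N=2^k-1$ is the number of nonempty subsets of $A^{(k)}$.

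For the inductive step I would use the decomposition from Steps~1--3 of the previous proof: the vertex set of $G^F_{A^{(k+1)}}$ splits into $V(G_1)$ (the subsets not containing $k+1$), the mirror copy $V(G_2)$ (each obtained by adjoining $k+1$), and the single new vertex $v_{1,k+1}$ corresponding to the singleton $\{k+1\}$. The key observation is that the map $g\colon v_{s,i}\mapsto v'_{s,i}$ adds the element $k+1$ to every subset, so whenever an edge $u_au_b$ is present in $G_1$ it forces an edge between $u_a$ and $g(u_b)$ in the cross-connection, precisely because adding the same new element to a subset preserves all the Fibonacci-sum adjacencies the original subset enjoyed. I would therefore take the Hamiltonian cycle $C_k$ in $G_1$, lift a consecutive run of it into $G_2$ by substituting primed vertices, and splice the two arcs together across the $G_1$--$G_2$ edges guaranteed by Step~2; the leftover singleton vertex $v_{1,k+1}$ is then inserted using the two guaranteed neighbours identified in Step~3(a)/3(b). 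Concretely, if $u_au_{a+1}$ is an edge of $C_k$, then $u_a v'_{a+1}$ and $u_{a+1}v'_a$ are both edges of the cross-connection, which lets me form the standard ``prism-type'' Hamiltonian cycle $u_1\cdots u_a\, v'_a v'_{a-1}\cdots v'_1 v'_N v'_{N-1}\cdots v'_{a+1}\,u_{a+1}\cdots u_N u_1$, and then reroute one edge of this cycle through $v_{1,k+1}$.

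The main obstacle will be the insertion of the isolated new vertex $v_{1,k+1}$ into the cycle: I must locate two \emph{consecutive} vertices on the prism cycle that are both adjacent to $v_{1,k+1}$, not merely two arbitrary neighbours. Theorem~\ref{Thm-2.1} only guarantees $d(v_{1,k+1})\geq 2$, so the careful part is to argue, using the Fibonacci-sum structure, that among the neighbours of $v_{1,k+1}$ one can always find an adjacent pair straddling an edge of the constructed cycle, or else locally re-splice the cycle so that such a pair appears. I expect this to reduce to checking that the neighbourhood of $v_{1,k+1}$ (subsets containing the relevant $f_{\ell-1}$ or $\ell'$) is large enough and well-distributed across the $G_2$ copy, which the doubling structure makes plausible but which requires a short combinatorial argument rather than a one-line appeal.

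As an alternative, should the direct splicing prove awkward, I would instead invoke a sufficient-condition theorem for Hamiltonicity. Since the previous analysis shows every vertex has degree at least $2$ and the graph is dense (each subset meets many Fibonacci-sum conditions), a Chv\'atal--Erd\H{o}s or Ore/Dirac type bound on the minimum degree of the underlying simple graph may apply once one checks $\delta\geq \nu/2$; however, because loops and multiple edges are present, I would first pass to the underlying simple graph and verify that the degree bound survives. The inductive splicing argument is cleaner and I would present that as the primary proof, keeping the degree-condition route as a fallback.
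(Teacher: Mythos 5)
You should know at the outset that the paper offers no argument for Corollary~\ref{Cor-2.3}: it is simply declared a ``direct consequence'' of Theorem~\ref{Thm-2.1}, which is logically insufficient (a connected graph with no pendant vertices need not be Hamiltonian). So your attempt to build an explicit cycle is a genuinely different and more substantial route than anything in the paper. Your central structural claim is sound: if $u_au_b$ is an edge of $G_1$ witnessed by a pair $(i',j')$ with $i'+j'\in\mathcal{F}$, the same pair witnesses both $u_a g(u_b)$ and $u_b g(u_a)$, so $G_2$ carries a copy of the cycle $C_k$ and the two cross edges you cite are guaranteed. However, the explicit prism cycle you wrote down uses the transitions $u_a\to v'_a$ and $v'_{a+1}\to u_{a+1}$, i.e.\ the edges $u_av'_a$ and $u_{a+1}v'_{a+1}$, which are \emph{not} the guaranteed ones: a vertex is adjacent to its own primed copy only if its subset carries a loop or contains an element $m$ with $m+(k+1)\in\mathcal{F}$. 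The splice should instead read $u_1\cdots u_a\,v'_{a+1}v'_{a+2}\cdots v'_N v'_1\cdots v'_a\,u_{a+1}\cdots u_N u_1$, which uses exactly the guaranteed edges $u_av'_{a+1}$ and $v'_au_{a+1}$.

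The genuine gap is the one you flag yourself: inserting $v_{1,k+1}$ requires two \emph{consecutive} vertices of the prism cycle both adjacent to it, and the bound $d(v_{1,k+1})\geq 2$ extracted from Theorem~\ref{Thm-2.1} is far too weak to force this. The gap is closable and you should close it explicitly. Steps 3(a)/3(b) of that proof produce an element $m\in\{1,\dots,k\}$, $m\neq k+1$, with $m+(k+1)\in\mathcal{F}$; then \emph{every} nonempty subset of $A^{(k+1)}$ containing $m$ is a neighbour of $v_{1,k+1}$, and there are $2^{k}$ such subsets among the $2^{k+1}-2$ vertices of the prism cycle. Since the largest independent set of a cycle of length $2^{k+1}-2$ has $2^{k}-1<2^{k}$ vertices, some edge of the prism cycle has both endpoints adjacent to $v_{1,k+1}$, and the singleton can be spliced in there. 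With that pigeonhole step added (and the base cases $n=1,2$ checked, noting that $K_1$ is Hamiltonian only by convention), your induction is complete. The Dirac/Ore fallback is less clean: counting neighbours from a single witness element gives a worst-case minimum degree of $2^{n-1}-1=\lfloor\nu/2\rfloor$ in the underlying simple graph, one short of Dirac's threshold for odd $\nu$, so that route would need its own additional argument.
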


\begin{lemma}\label{Lem-2.4}
For $n\geq 1$, the corresponding Fibonacci-sum set-graph has a unique vertex $v$ for which $l(v)=\varepsilon(G^F_n)$.
\end{lemma}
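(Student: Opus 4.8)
The plan is to identify the extremal vertex explicitly: it should be the vertex $v_{n,1}$ corresponding to the whole ground set $A^{(n)}=\{1,2,\ldots,n\}$. First I would pin down what a loop actually counts. By Definition \ref{Defn-2.2}, a loop at a vertex $v_{s,i}$ comes from a pair $(i',j')$ with $i',j'\in A^{(n)}_{s,i}$, $i'\neq j'$ and $i'+j'\in\mathcal{F}$; since the defining condition is symmetric in $i',j'$, it is natural (and consistent with Figure \ref{fig:fig-2}) to read $l(v_{s,i})$ as the number of \emph{unordered} pairs $\{i',j'\}\subseteq A^{(n)}_{s,i}$ of distinct elements whose sum is a Fibonacci number. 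The case $n=1$ is immediate, since then the only vertex $v_{1,1}$ satisfies $l(v_{1,1})=0=\varepsilon(G^F_1)$; so I would assume $n\ge 2$.

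For existence I would apply this count to the full set. Then $l(v_{n,1})$ is the number of unordered pairs $\{i',j'\}\subseteq\{1,\ldots,n\}$ with $i'\neq j'$ and $i'+j'\in\mathcal{F}$. But these pairs are exactly the edges of the Fibonacci-sum graph $G^F_n$, whose edge set is $\{v_{i'}v_{j'}: i'\neq j',\ i'+j'\in\mathcal{F}\}$. Hence $l(v_{n,1})=\varepsilon(G^F_n)$, which settles the existence part.

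For the upper bound I would note that for any vertex $v_{s,i}$ with $A^{(n)}_{s,i}=S\subseteq\{1,\ldots,n\}$, the Fibonacci pairs contained in $S$ form a subset of those contained in $\{1,\ldots,n\}$. Therefore $l(v_{s,i})\le\varepsilon(G^F_n)$, with equality precisely when every Fibonacci pair of the full set already lies inside $S$. Uniqueness thus reduces to showing that the only $S$ containing all Fibonacci pairs is the whole set.

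The one genuine obstacle is the following fact, which I would extract from the proof of Theorem \ref{Thm-2.1}: every integer $k\in\{1,\ldots,n\}$ belongs to at least one Fibonacci pair inside $\{1,\ldots,n\}$ (equivalently, the singleton vertex $v_{1,k}$ carries no loop yet has $d(v_{1,k})\ge 2$, forcing a partner $j\neq k$ in the set with $k+j\in\mathcal{F}$). Concretely, for $k\ge 2$ let $f_\ell$ be the largest Fibonacci number with $f_\ell\le k$, so $f_\ell\le k<f_{\ell+1}$; then $j:=f_{\ell+1}-k$ satisfies $1\le j\le f_{\ell+1}-f_\ell=f_{\ell-1}<f_\ell\le k\le n$, whence $j\in\{1,\ldots,n\}$, $j\neq k$, and $k+j=f_{\ell+1}\in\mathcal{F}$ (the value $k=1$ being handled by the pair $\{1,2\}$, valid since $n\ge2$). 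Granting this, if $S\subsetneq\{1,\ldots,n\}$ is proper I would pick $k\in\{1,\ldots,n\}\setminus S$ together with a Fibonacci partner $j$ of $k$; the pair $\{k,j\}$ is then not contained in $S$, so $l(v_{s,i})<\varepsilon(G^F_n)$. Consequently $v_{n,1}$ is the only vertex attaining the value $\varepsilon(G^F_n)$, which is the assertion of the lemma.
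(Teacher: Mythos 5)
Your proposal is correct, and its skeleton matches the paper's: identify the vertex corresponding to the whole ground set, observe that its loops are in bijection with the edges of $G^F_n$, and argue that no other vertex attains this count. However, your treatment of uniqueness is a genuine improvement. The paper's proof verifies $n=1,2$ explicitly and then asserts uniqueness for general $n$ essentially from the observation that every other subset is a proper subset of $A^{(n)}$, invoking ``mathematical induction'' without carrying out the inductive step; but proper containment alone does not force a strictly smaller loop count — one must know that the missing element actually participates in some Fibonacci pair. Your explicit argument that every $k\in\{1,\ldots,n\}$ has a partner $j=f_{\ell+1}-k$ with $1\le j\le f_{\ell-1}<f_\ell\le k$ (and the pair $\{1,2\}$ for $k=1$, $n\ge 2$) supplies exactly this missing ingredient, so your write-up closes a gap that the paper leaves open. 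One cosmetic remark: the paper labels the full-set vertex $v_{2^n-1,1}$ while you write $v_{n,1}$; under Definition \ref{Defn-2.1}, where $s$ is the cardinality of the subset, your indexing is the consistent one (compare $v_{3,1}$ in Figure \ref{fig:fig-2}), so this is a discrepancy in the paper rather than in your proof.
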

\begin{proof}
For $n=1$, it follows that $G^F_1 \cong K_1 \cong G^F_{A^{(1)}}$. Therefore, in $G^F_{A^{(1)}}$, $l(v_{1,1})=0=\varepsilon(G^F_1)$. Clearly, the vertex $v_{1,1}$ is unique. Similarly, for $n=2$, the Fibonacci-sum graph is $G^F_2 =P_2$. The Fibonacci-sum set-graph has vertices $v_{1,1} =\{1\}$, $v_{1,2} = \{2\}$, $v_{2,1}=\{1,2\}$ and therefore has edges $v_{1,1}v_{1,2}$, $v_{1,1}v_{2,1}$, $v_{1,2}v_{2,1}$ and loop $v_{2,1}v_{2,1}$. Clearly, $l(v_{2,1})= 1 =\varepsilon(G^F_2)$ and $v_{2,1}$ is unique, since $A^{(2)}_{1,1}\subset A^{(2)}_{2,1}=A^{(2)}$ and $A^{(2)}_{1,2}\subset A^{(2)}_{2,1}=A^{(2)}$. Since $A^{(n)}_{s,i}\subset A^{(n)}\in \mathcal{S}$ and $v_{2^n-1,1}\mapsto A^{(n)}$ is unique, by mathematical induction, it follows that Definition \ref{Defn-2.1} and Definition \ref{Defn-2.2} imply $l(v_{2^n-1,1}) =\varepsilon(G^F_n)$.
\end{proof}

The next result is a direct derivative of Corollary 16 in \cite{AGL} read together with Lemma \ref{Lem-2.4}. Corollary 16 states that for $n \geq 1$ and $k \geq 2$ satisfying, $f_k\leq n \leq f_{k+1}$ then the number of edges of the Fibonacci-sum graph, $G^F_n$ is
\begin{equation*} 
\varepsilon(G^F_n) = 
\begin{cases}
n+ \frac{f_k+1}{2}- \frac{\lfloor\frac{4(k+1)}{3}\rfloor}{2}, &\text {if $n \leq \frac{f_{k+2}}{2}$},\\
2n +\frac{f_k+1}{2} - \frac{\lfloor\frac{4(k+1)}{3}\rfloor}{2} - \lceil \frac{f_{k+2}-1}{2}\rceil, & \text {if $n > \frac{f_{k+2}}{2}$}.
\end{cases}
\end{equation*} 
\begin{theorem}
Let $n \geq 1$ and $k \geq 2$ be integers satisfying the inequality $f_k\leq n \leq f_{k+1}$. For $A^{(n)} = \{1,2,3,\dots, n\}$ the vertex $v_{2^n-1,1}$ of the Fibonacci-sum set-graph, $G^F_{A^{(n)}}$ has maximum loop number, 
\begin{equation*} 
l(v_{2^n-1,1}) = 
\begin{cases}
n+ \frac{f_k+1}{2}- \frac{\lfloor\frac{4(k+1)}{3}\rfloor}{2}, & \mbox{if $n \leq \frac{f_{k+2}}{2}$},\\
2n +\frac{f_k+1}{2} - \frac{\lfloor\frac{4(k+1)}{3}\rfloor}{2} - \lceil \frac{f_{k+2}-1}{2}\rceil, & \mbox{if $n > \frac{f_{k+2}}{2}$}.
\end{cases}
\end{equation*} 
\end{theorem}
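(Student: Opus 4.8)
The plan is to reduce the statement to Lemma~\ref{Lem-2.4} together with the cited Corollary~16 of \cite{AGL}, which supplies the only genuinely computational input. First I would unpack what a loop at a vertex counts. By Definition~\ref{Defn-2.2}, a loop at $v_{s,i}$ arises from a pair $(i',j')$ with $i',j'\in A^{(n)}_{s,i}$, $i'\neq j'$ and $i'+j'\in\mathcal{F}$; identifying the ordered pair $(i',j')$ with its reverse $(j',i')$, each loop corresponds bijectively to an \emph{unordered} pair $\{i',j'\}\subseteq A^{(n)}_{s,i}$ whose sum is a Fibonacci number. Hence $l(v_{s,i})$ equals the number of Fibonacci-sum pairs lying inside the subset $A^{(n)}_{s,i}$.

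Next I would establish maximality. The vertex $v_{2^n-1,1}$ corresponds to the full set $A^{(n)}=\{1,2,\ldots,n\}$. For any vertex $v_{s,i}$ we have $A^{(n)}_{s,i}\subseteq A^{(n)}$, so every Fibonacci-sum pair inside $A^{(n)}_{s,i}$ is also one inside $A^{(n)}$; consequently $l(v_{s,i})\leq l(v_{2^n-1,1})$. The Fibonacci-sum pairs inside the full set are precisely the edges of $G^F_n$, since an edge $v_{i'}v_{j'}$ of $G^F_n$ exists exactly when $i'\neq j'$ and $i'+j'\in\mathcal{F}$; thus this maximal value equals $\varepsilon(G^F_n)$, which is the content of Lemma~\ref{Lem-2.4}. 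The uniqueness asserted there shows the maximum is attained only at $v_{2^n-1,1}$.

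Finally I would substitute the closed form for $\varepsilon(G^F_n)$ given by Corollary~16 of \cite{AGL}, valid whenever $f_k\leq n\leq f_{k+1}$ with $k\geq 2$, splitting according to whether $n\leq\tfrac{f_{k+2}}{2}$ or $n>\tfrac{f_{k+2}}{2}$. This reproduces the two-case expression in the statement verbatim.

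I do not anticipate a serious obstacle, as Lemma~\ref{Lem-2.4} and the cited corollary carry the weight of the argument. The only points requiring care are the two-to-one identification of ordered Fibonacci-sum pairs with loops, so that no loop is double-counted, and the small-$n$ boundary behaviour, namely confirming that the index $k$ with $f_k\leq n\leq f_{k+1}$ is well defined and that the identity $l(v_{2^n-1,1})=\varepsilon(G^F_n)$ persists at the endpoints $n=f_k$ and $n=f_{k+1}$.
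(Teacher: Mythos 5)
Your proposal is correct and follows exactly the route the paper intends: the theorem is presented there without proof, as a ``direct derivative'' of Corollary~16 of \cite{AGL} read together with Lemma~\ref{Lem-2.4}, which is precisely the reduction you carry out. Your additional details --- identifying each loop at $v_{2^n-1,1}$ with an unordered Fibonacci-sum pair in $A^{(n)}$ and deducing maximality from subset monotonicity --- only make explicit what the paper leaves implicit.
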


The result in Theorem 2.5 motivates us to introduce the notion of the loop sequence of a graph as follows:

\begin{definition}{\rm 
The \textit{loop sequence} is the sequence of the number of loops of the vertices, $v_i \in V(G)$, $1\leq i \leq n$. For a simple graph $G$ of order $n$, the loop sequence is $(l(v_i)=0:1\leq i\leq n)$.
}\end{definition}

For $n\geq 1$, Theorem 2.5 results in an integer list given by $\mathcal{L}_n = (0,1,2,3,4,5,7,8,\\ 9,10,12,14,15,16,17,18,19,21,23,25,26,\ldots, l(v_{2^n-1,1}))$. Note that certain integers such as $6,11,13,20,22,\ldots$ are excluded from the list.

\begin{theorem}\label{Thm-2.6}
For $n\geq 1$, the loop sequence of the corresponding Fibonacci-sum set-graph $G^F_{A^{(n)}}$ contains at least one of each entry in $\mathcal{L}_n$.
\end{theorem}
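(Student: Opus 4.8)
The plan is to exhibit, for each entry of $\mathcal{L}_n$, an explicit vertex of $G^F_{A^{(n)}}$ carrying exactly that many loops. First I would record the basic interpretation of the loop number: applying Definition \ref{Defn-2.2} with $s=t$ and $i=j$, the loops at a vertex $v_{s,i}$ are in bijection with the unordered pairs $\{i',j'\}\subseteq A^{(n)}_{s,i}$ for which $i'\neq j'$ and $i'+j'\in\mathcal{F}$. In other words, $l(v_{s,i})$ equals the number of Fibonacci-sum pairs lying inside the subset $A^{(n)}_{s,i}$, which is precisely the number of edges of the Fibonacci-sum graph induced on the integer set $A^{(n)}_{s,i}$.

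Next I would unwind the definition of $\mathcal{L}_n$. By Lemma \ref{Lem-2.4} the maximum loop number of $G^F_{A^{(m)}}$ equals $\varepsilon(G^F_m)$, and the list $\mathcal{L}_n$ is obtained by letting $m$ run from $1$ to $n$ in the formula of Theorem 2.5; hence $\mathcal{L}_n=(\varepsilon(G^F_m):1\leq m\leq n)$. Thus the theorem amounts to showing that each value $\varepsilon(G^F_m)$, $1\leq m\leq n$, occurs as the loop number of some vertex of the single graph $G^F_{A^{(n)}}$.

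The key step is then immediate: for each $m$ with $1\leq m\leq n$, the initial segment $\{1,2,\ldots,m\}$ is a nonempty subset of $A^{(n)}$, hence corresponds to a vertex of $G^F_{A^{(n)}}$. The Fibonacci-sum pairs inside $\{1,\ldots,m\}$ are exactly the edges of $G^F_m$, so by the interpretation above this vertex has loop number $\varepsilon(G^F_m)$. As $m$ ranges over $1,\ldots,n$ these vertices are pairwise distinct (their corresponding subsets are distinct) and realize every entry of $\mathcal{L}_n$, which completes the argument.

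There is no real obstacle here; the content is the bookkeeping that identifies $\mathcal{L}_n$ with $(\varepsilon(G^F_m))_{m=1}^{n}$ together with the observation that restricting the Fibonacci-sum relation to the initial segment $\{1,\ldots,m\}$ reproduces $G^F_m$ exactly. The only point requiring a word of care is confirming that a loop is counted once per unordered Fibonacci pair, so that the count matches the edge count $\varepsilon(G^F_m)$ rather than twice that; this is consistent with the degree convention $d(v)=2l(v)+\epsilon(v,u)$ and with the base cases worked out in Lemma \ref{Lem-2.4}.
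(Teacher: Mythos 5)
Your proposal is correct and follows essentially the same route as the paper: both exhibit the vertices corresponding to the initial segments $\{1,2,\ldots,m\}$, $1\leq m\leq n$, as the witnesses realizing the entries of $\mathcal{L}_n$. The only difference is that the paper verifies this explicitly just for $m\leq 4$ and then appeals to an unspecified induction, whereas you state the general mechanism cleanly --- the loop number of the vertex of a subset $S$ counts the unordered Fibonacci-sum pairs inside $S$, so the vertex of $\{1,\ldots,m\}$ has loop number exactly $\varepsilon(G^F_m)$, the $m$-th entry of $\mathcal{L}_n$ --- which actually closes the inductive gap the paper leaves open.
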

\begin{proof}
We have to prove that for all $i\in \mathcal{L}_n$, there exists at least one vertex $v_{s,i} \in V(G^F_{A^{(n)}})$ with $l(v_{s,i})=i$.

Consider any integer $n\in \N$ and its corresponding Fibonacci-sum set-graph. All vertices $v_{1,i}$, $1 \leq i\leq n$ are loop-free. Hence, $l(v_{1,i})=0$, $1\leq i \leq n$. From Lemma \ref{Lem-2.4}, it follows that a unique vertex exists with loop number equal to $l(v_{2^n-1,1})$. 

For $n\geq 2$, all Fibonacci-sum set-graphs have the vertex $v_{2,1}\mapsto \{1,2\}$ which has a single loop. Therefore, at least the vertex $v_{2,1}$ exists with $l(v_{2,1})=1$. Similarly, for $n\geq 3$, all Fibonacci-sum set-graphs have the vertex $v_{3,1}\mapsto \{1,2,3\}$ which has a double loop. Therefore, at least the vertex $v_{3,1}$ exists with $l(v_{3,1})=2$. Similarly, for $n\geq 4$, all Fibonacci-sum set-graphs have the vertex $v_{4,1}\mapsto \{1,2,3,4\}$ which has a triple loop. Therefore, at least the vertex $v_{4,1}$ exists with $l(v_{4,1})=3$.

Therefore, by induction read together with the well-ordering property of the set of positive integers, the well-ordering of the power set $\mathcal{P}(A^{(n)})-\emptyset$ and the well-defined set $\mathcal{L}_n$, the result follows in general.
\end{proof}

\begin{theorem}\label{Thm-2.7}
For $n\geq 1$ all vertex degrees in the corresponding Fibonacci-sum set-graph are even.
\end{theorem}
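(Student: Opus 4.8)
The plan is to reduce the statement to a single parity count. Fix $n\ge 2$ (the case $n=1$ is immediate, since $G^F_{A^{(1)}}\cong K_1$ consists of one vertex of degree $0$), write $S_w=A^{(n)}_{s,i}$ for the subset attached to a vertex $w$, and fix an arbitrary vertex $v$ with $X=S_v$. Using $d(v)=2l(v)+\sum_{u\in N(v)}\epsilon(v,u)$, each loop contributes an even amount to the degree, so it suffices to show that the non-loop contribution $\sum_{u\ne v}\epsilon(v,u)$ is even. By Definition~\ref{Defn-2.2}, for $u\ne v$ the multiplicity $\epsilon(v,u)$ equals the number of ordered pairs $(a,b)$ with $a\in S_v$, $b\in S_u$, $a\ne b$ and $a+b\in\mathcal F$; a relabelling $(a,b)\mapsto(b,a)$ shows this count is symmetric in $S_v,S_u$, so it is well defined for the underlying multigraph.

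The main step is a double count. Summing over all $u\ne v$ and interchanging the order of summation,
\[
\sum_{u\ne v}\epsilon(v,u)=\sum_{a\in X}\ \sum_{\substack{1\le b\le n,\ b\ne a\\ a+b\in\mathcal F}} \#\{u\ne v:\ b\in S_u\}.
\]
For a fixed $b\in A^{(n)}$, the number of (necessarily non-empty) subsets of $A^{(n)}$ containing $b$ is $2^{n-1}$, and exactly one of these equals $X$ precisely when $b\in X$. Hence $\#\{u\ne v: b\in S_u\}=2^{n-1}$ if $b\notin X$ and $2^{n-1}-1$ if $b\in X$.

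Since $n\ge 2$, the quantity $2^{n-1}$ is even, so reducing modulo $2$ kills every term with $b\notin X$ and leaves exactly the terms with $b\in X$. Therefore
\[
\sum_{u\ne v}\epsilon(v,u)\equiv \#\{(a,b)\in X\times X:\ a\ne b,\ a+b\in\mathcal F\}\pmod 2 .
\]
The right-hand set is invariant under $(a,b)\mapsto(b,a)$ and contains no diagonal element, so its cardinality is twice the number of unordered Fibonacci-sum pairs inside $X$, hence even. Consequently $\sum_{u\ne v}\epsilon(v,u)$ is even, and so is $d(v)$; as $v$ was arbitrary the result follows for all $n\in\N$.

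I expect the only genuine obstacle to be the bookkeeping of multiplicities. One must verify that the ordered-pair count for $\epsilon(v,u)$ is symmetric in the two subsets (so that it describes an undirected multigraph) and is compatible with the loop convention, where each unordered Fibonacci pair inside $X$ yields one loop contributing $2$ to $d(v)$. The delicate point is that the single excluded vertex $u=v$ must be subtracted exactly when $b\in X$; it is precisely this $-1$ correction that converts the otherwise-even sum into the manifestly even within-$X$ count. An alternative, in the inductive spirit of Theorem~\ref{Thm-2.1}, would track how each degree changes when passing from $G^F_{A^{(k)}}$ to $G^F_{A^{(k+1)}}$ through the two-copy construction, but this forces a case analysis of the new edges created by the element $k+1$; the direct parity count above sidesteps that analysis and is therefore cleaner.
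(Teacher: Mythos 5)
Your argument is correct, but it takes a genuinely different route from the paper's. The paper proceeds by induction on $n$: it invokes the two-copy construction from the proof of Theorem~\ref{Thm-2.1} (so the disjoint union $G_1\cup G_2$ inherits even degrees), argues that each edge $v_{s,i}v_{t,j}$ of $G_1$ spawns a pair of cross edges, and then closes with the single observation that every element of $A^{(n)}$ lies in $2^{n-1}$ subsets, so ``the number of all possible pairs $(i,j)$ such that $i+j$ is a Fibonacci number is even.'' That inductive argument is sketchy at exactly the points you anticipated: it does not track how the new element $k+1$ changes multiplicities, and it never addresses the exclusion of $u=v$ from the neighbour sum. Your proof is a direct, non-inductive double count at a fixed vertex: you compute $\sum_{u\ne v}\epsilon(v,u)$ by interchanging summation, use that $\#\{u\ne v: b\in S_u\}$ equals $2^{n-1}$ or $2^{n-1}-1$ according as $b\notin X$ or $b\in X$, and observe that the residual term modulo $2$ is the within-$X$ ordered-pair count, which is even by the fixed-point-free involution $(a,b)\mapsto(b,a)$. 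The shared kernel of both proofs is the parity of $2^{n-1}$ for $n\ge 2$; what your version buys is a complete and self-contained justification, in particular the $-1$ correction for the excluded vertex $u=v$, which the paper silently omits. The one caveat, which you rightly flag yourself, is that the edge multiplicity $\epsilon(v,u)$ must be read as the (symmetric) ordered-pair count implicit in Definition~\ref{Defn-2.2}; under that reading your proof is complete, and since loops enter the degree with weight $2$ in any case, the conclusion is insensitive to the loop convention.
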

\begin{proof}
As the loops account for an even count to a vertex degree, it is sufficient to prove the result for the partial vertex degrees accounted for by incident edges only. 

The result can easily be verified for $n=1,2$. Assume the result holds for $n=k\geq 3$. Following the partial construction as explained in the proof of Theorem \ref{Thm-2.1}, it follows that the vertex degrees in the disjoint union, $G_1\cup G_2$ are even. Edges between $G_1$ and $G_2$ are added as follows. Each edge $v_{s,i}v_{t,j}$ in $G_1$ corresponds to the edges $v_{s,i}v'_{t,j}$ and $v_{t,j}v'_{s,i}$ respectively. Hence, this partial construction has even vertex degrees.

Finally, for $n\geq 2$, any element $j\in A^{(n)}$ is contained in $2^{n-1}$ distinct subsets of the power set $\mathscr{P}(A^{(n)})-\emptyset$. Hence, the number of all possible pairs $(i,j)$ such that $i+j$ is a Fibonacci number, is even. Thus, adding all additional edges to complete the construction of $G^F_{A^{(n)}}$ results in all vertex degrees being even.
\end{proof}

\begin{corollary}\label{Cor-2.8}
For $n\geq 1$ the Fibonacci-sum set-graph is Eulerian.
\end{corollary}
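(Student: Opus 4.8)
The plan is to recognise that this corollary is an immediate consequence of the two results that precede it, combined with the classical characterisation of Eulerian (multi)graphs. Recall Euler's theorem: a connected graph is Eulerian if and only if every vertex has even degree. The standard version of this theorem extends without modification to graphs carrying loops and multiple edges, since a loop contributes exactly $2$ to the degree of its vertex (consistent with the degree formula $d(v)=2l(v)+\epsilon(v,u)$ stated in the introduction) and an Eulerian circuit traverses each loop once as a closed excursion. Because the Fibonacci-sum set-graph is a finite connected multigraph with loops permitted, this extended form of Euler's theorem is precisely the tool we need.

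First I would invoke Corollary~\ref{Cor-2.2}, which guarantees that $G^F_{A^{(n)}}$ is connected for every $n\geq 1$. Next I would invoke Theorem~\ref{Thm-2.7}, which establishes that every vertex degree in $G^F_{A^{(n)}}$ is even. These two facts together supply exactly the hypotheses of Euler's characterisation, so the conclusion that $G^F_{A^{(n)}}$ is Eulerian follows directly, with no further computation required.

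The only point demanding a small remark is the degenerate base case $n=1$, where $G^F_{A^{(1)}}\cong K_1$ is a single loop-free, edge-free vertex. Here the empty closed walk serves as a (trivial) Eulerian circuit, so the statement holds by convention; this is consistent with the computation $l(v_{1,1})=0=\varepsilon(G^F_1)$ recorded in Lemma~\ref{Lem-2.4}. For all $n\geq 2$ the graph genuinely carries edges, and the argument above applies verbatim.

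I do not anticipate a substantive obstacle, since the corollary is a straightforward synthesis of Corollary~\ref{Cor-2.2} and Theorem~\ref{Thm-2.7}. If any care is needed, it lies solely in confirming that Euler's theorem is being applied in its loop-and-multiedge form rather than the simple-graph form; once that is noted, the proof is complete in one line.
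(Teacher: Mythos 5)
Your proposal is correct and follows essentially the same route as the paper, which likewise deduces the corollary directly from Theorem~\ref{Thm-2.7} (all degrees even) via Euler's characterisation, with connectivity supplied by Corollary~\ref{Cor-2.2}. Your added remarks on the loop-and-multiedge form of Euler's theorem and the trivial case $n=1$ are sensible refinements but do not change the argument.
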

\begin{proof}
This result is a direct consequence of Theorem \ref{Thm-2.7}, since $G^F_{A^{(n)}}$ has no vertex with odd degree.
\end{proof}

\begin{definition}\label{Defn-2.3}{\rm 
The \textit{popped graph} of a graph $G$, denoted by $G^{^\backepsilon}$, is the graph obtained by deleting all loops and all multiple edges except one (to retain adjacency) from $G$. 
}\end{definition}

\noi Figure \ref{fig:fig-3} depicts $G^{F^{^\backepsilon}}_{A^{(3)}}$.

\begin{figure}[h!]
\centering
\begin{tikzpicture}[auto,node distance=1.75cm,
thick,main node/.style={circle,draw,font=\sffamily\Large\bfseries}]
\vertex (v1) at (51.42:3) [fill,label=right:$v_{3,1}$]{};
\vertex (v2) at (0:3) [fill,label=right:$v_{1,1}$]{};
\vertex (v3) at (308.57:3) [fill,label=below:$v_{1,2}$]{};
\vertex (v4) at (257.14:3) [fill,label=below:$v_{1,3}$]{};
\vertex (v5) at (205.71:3) [fill,label=left:$v_{2,1}$]{};
\vertex (v6) at (154.28:3) [fill,label=left:$v_{2,2}$]{};
\vertex (v7) at (102.85:3) [fill,label=above:$v_{2,3}$]{};
\path 
(v1) edge (v2)
(v1) edge (v3)
(v1) edge (v4)
(v1) edge (v5)
(v1) edge (v6)
(v1) edge (v7)
(v2) edge (v3)
(v2) edge (v5)
(v2) edge (v6)
(v2) edge (v7)
(v3) edge (v4)
(v3) edge (v5)
(v3) edge (v6)
(v3) edge (v7)
(v4) edge (v5)
(v4) edge (v7)
(v5) edge (v6)
(v5) edge (v7)
(v6) edge (v7)
;
\end{tikzpicture}
\caption{Popped Fibonacci-sum set-graph $G^{F^{^\backepsilon}}_{A^{(3)}}$.}\label{fig:fig-3}
\end{figure} 

\begin{proposition}\label{Prop-2.9}
For $n\geq 1$, the popped Fibonacci-sum set-graph has $\varepsilon(G^{F^{^\backepsilon}}_{A^{(n)}}) \leq (2^n-2) + \sum\limits_{u\in N(v_{2^n-1,1})}\epsilon(v_{2^n-1,1},u)$.
\end{proposition}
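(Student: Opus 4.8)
The plan is to single out the vertex $w:=v_{2^n-1,1}$, which maps to the whole set $A^{(n)}=\{1,2,\dots,n\}$, and to count the edges of the popped graph according to whether or not they meet $w$. Since popping only merges parallel edges and deletes loops, the open neighbourhood $N(w)$ is the same set of vertices in the multigraph $G^F_{A^{(n)}}$ and in $G^{F^{^\backepsilon}}_{A^{(n)}}$. Hence, writing $H$ for the subgraph induced on $V\setminus\{w\}$ (a graph on $2^n-2$ vertices),
\[
\varepsilon\big(G^{F^{^\backepsilon}}_{A^{(n)}}\big)=|N(w)|+\varepsilon(H).
\]
The small cases $n=1,2,3$ are read off directly from Figures~\ref{fig:fig-2} and~\ref{fig:fig-3}, which also fix the adjacency and multiplicity conventions used throughout.

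The elementary half is the contribution of $w$. Popping replaces the $\epsilon(w,u)\ge 1$ parallel edges from $w$ to each neighbour $u$ by a single edge, so
\[
|N(w)|=\sum_{u\in N(w)}1\ \le\ \sum_{u\in N(w)}\epsilon(w,u).
\]
Substituting into the displayed identity, the claimed bound becomes equivalent to the single inequality
\[
\varepsilon(H)\ \le\ (2^n-2)+\Big(\sum_{u\in N(w)}\epsilon(w,u)-|N(w)|\Big),
\]
whose right-hand side is $2^n-2$ plus the total parallel-edge surplus accumulated at the full-set vertex $w$. Establishing this inequality is essentially the whole content of the proposition.

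The main obstacle is that $H$ is far from sparse: any two vertices whose subsets each contain an element that completes to a Fibonacci sum are adjacent, so a crude count is useless and one cannot simply hope for $\varepsilon(H)\le 2^n-2$. The argument must instead show that the surplus $\sum_{u}\epsilon(w,u)-|N(w)|$ grows fast enough to absorb the excess of $\varepsilon(H)$ over $2^n-2$. Two complementary routes are worth pushing. First, a charging scheme: assign to each edge of $H$ either a distinct vertex of $H$ or a distinct surplus parallel edge at $w$, exploiting that every Fibonacci cross-sum realised inside $H$ is also realised at $w$ (because $w$ contains every element of $A^{(n)}$) and that $w$ carries $\epsilon(w,u)$ parallel copies of each such connection. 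Second, induction through the doubling construction of Theorem~\ref{Thm-2.1}: from the bound for $A^{(k)}$, assemble $G^F_{A^{(k+1)}}$ out of the two copies $G_1,G_2$ and the vertex $\{k+1\}$, pop it, and split the surviving edges into those internal to $G_1$, internal to $G_2$, crossing between $G_1$ and $G_2$, and incident with $\{k+1\}$, bounding the crossing class — where the real difficulty lies — against the surplus newly created at the full-set vertex of $G_2$. Making either the charging injection or the crossing-edge count tight is the crux of the proof.
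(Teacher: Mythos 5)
You have not given a proof; you have given a correct reduction followed by a to-do list. The identity $\varepsilon\bigl(G^{F^{^\backepsilon}}_{A^{(n)}}\bigr)=|N(w)|+\varepsilon(H)$, with $w=v_{2^n-1,1}$ and $H$ the popped graph minus $w$, is fine, and since $|N(w)|\le 2^n-2$ trivially, the whole proposition reduces to $\varepsilon(H)\le\sum_{u\in N(w)}\epsilon(w,u)$ (your substitution of $|N(w)|\le\sum_u\epsilon(w,u)$ goes the less useful way). For that inequality you offer two candidate strategies, carry out neither, and say explicitly that making either one work is ``the crux.'' That is a genuine gap, and it sits exactly where the paper's own argument does its (equally loose) work: after the base cases $n\le 3$, the paper observes that any edge $v_{s,i}v_{t,j}$ of $H$ is witnessed by some Fibonacci pair $i'+j'$ with $i',j'\in A^{(n)}$, hence forces edges $wv_{s,i}$ and $wv_{t,j}$, and then asserts the bound ``by induction.'' This is your charging route in embryonic form, but neither you nor the paper exhibits an injection from $E(H)$ into the parallel edges at $w$; distinct edges of $H$ routinely share the same witnessing pair (e.g.\ every pair of subsets each containing $1$ and $2$ is witnessed by $1+2=3$), so the charge is massively many-to-one.

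Worse, your own warning that ``a crude count is useless'' because $H$ is dense is not an inconvenience to be engineered around --- it shows the missing step cannot be filled. Any two of the $2^{n-2}$ subsets containing both $1$ and $2$ are adjacent in $H$, so $\varepsilon(H)=\Omega(4^n)$, while $\epsilon(w,u)$ under any reading of Definition~\ref{Defn-2.2} is at most the number of ordered Fibonacci pairs in $A^{(n)}\times A^{(n)}$, i.e.\ $O(n)$, so the right-hand side of the proposition is $O(n2^n)$. Concretely, for $n=4$ the Fibonacci pairs in $\{1,2,3,4\}$ are $\{1,2\},\{2,3\},\{1,4\}$; a direct check shows that only $13$ of the $\binom{14}{2}=91$ pairs of nonempty proper subsets are non-adjacent, so $\varepsilon\bigl(G^{F^{^\backepsilon}}_{A^{(4)}}\bigr)=78+14=92$, whereas each element $b$ lies in $7$ proper nonempty subsets and contributes at most $\deg_F(b)$ ordered pairs to each, giving $\sum_{u\in N(w)}\epsilon(w,u)\le 7\cdot(2+2+1+1)=42$ and a right-hand side of at most $56$. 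So the inequality you defer is false for $n=4$ (and asymptotically), and no charging scheme or doubling induction can rescue it; the statement itself, not just your proof of it, needs repair.
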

\begin{proof}
Clearly, $\varepsilon(G^F_{A^{(n)}})=\varepsilon(G^{F^{^\backepsilon}}_{A^{(n)}})=2^n-1$. For $n=1$, the equality holds and its verification is straight forward.

For $n\geq 2$, it can easily be verified that vertex $v_{2^n-1,1}$ is adjacent to all vertices and hence in the popped graph $G^{F^{^\backepsilon}}_{A^{(n)}}$, we have $d(v_{2^n-1,1})=2^n-2$. With respect to the second term $\sum\limits_{u\in N(v_{2^n-1,1})}\epsilon(v_{2^n-1,1},u)$ mathematical induction will be used. For $n=2$ the popped graph $G^{F^{^\backepsilon}}_{A^{(2)}} \cong K_3$. Hence, $\varepsilon(K_3) = 3 < 5 = (2^2-2) + \sum\limits_{u\in N(v_{2^2-1,1})}\epsilon(v_{2^2-1,1},u)$.

For $n=3$, Figure \ref{fig:fig-2} shows that $\sum\limits_{u\in N(v_{2^3-1,1})}\epsilon(v_{2^3-1,1},u) =15$, whilst Figure \ref{fig:fig-3} shows that $\varepsilon(G^{F^{^\backepsilon}}_{A^{(3)}} -v_{3,1})= 13$. Therefore, $\varepsilon(G^{F^{^\backepsilon}}_{A^{(3)}}) < (2^3-2) + \sum\limits_{u\in N(v_{2^3-1,1})}\epsilon(v_{2^3-1,1},u)$.

Now, consider the case $n=k$. Note that for any distinct pair of vertices $v_{s,i} \neq v_{2^k-1,1}$, $v_{t,j} \neq v_{2^k-1,1}$ each edge $v_{s,i}v_{t,j}$ which exists represents that, say $i' \in A^{(n)}_{s,i}$ and $j' \in A^{(n)}_{t,j}$, such that $i'+j'$ is a Fibonacci number. Since both $i',j' \in A^{(n)}_{2^k-1,1}$, two edges $v_{2^k-1,1}v_{s,i}$ and $v_{2^k-1,1}v_{t,j}$ must exist because of the existence of edge $v_{s,i}v_{t,j}$. Therefore, the general result $\varepsilon(G^{F^{^\backepsilon}}_{A^{(n)}}) < (2^n-2) + \sum\limits_{u\in N(v_{2^n-1,1})}\epsilon(v_{2^n-1,1},u)$, holds for $n\geq 2$ by induction.
\end{proof}

\section{Conclusion}

%Similar to the fact that a vertex is inherently adjacent to itself and $K_1$ is considered to be connected, a vertex with a loop is externally connected to itself and hence may be called \textit{self-adjacent}. This understanding is important  to clarify that a coloured vertex does not represent a contradiction to the definition of a proper colouring of a graph. Put differently, both inherent and external adjacency are acceptable and therefore not contradictory to the definition of a proper colouring. Recall that the minimum number of distinct colours which permits a proper colouring of a graph $G$ is called the chromatic number of $G$ and is denoted by, $\chi(G)$.

We know that a Fibonacci-sum graph $G^F_n,\ n\geq 2$ is bipartite and hence $\chi(G^F_n)=2$, where $n\geq 2$ (see \cite{AGL}). From Corollary \ref{Cor-2.3} and the fact that a Fibonacci-sum set-graph has an odd number of vertices imply that for $n\geq 2$ an odd spanning cycle exists and therefore a Fibonacci-sum set-graph is not $2$-colourable. We observe that the respective rainbow neighbourhood numbers are  $r^+_\chi(G^F_n) = r^-_\chi(G^F_n)= 2^n-1$ (see \cite{KS1}). An important invariant for further investigations and understanding of colouring is the clique number $\omega(G)$. Determining this invariant for Fibonacci-sum set-graphs is still open.

An \textit{eared clique} is an induced complete subgraph of a graph $G$ with the loops of each vertex and multiple edges which exist in $G$. The order of a largest eared clique of a graph $G$ is called the \textit{eared clique number} of $G$ and is denoted by $\omega_e(G)$.  Clearly $\omega_e(G) = \omega(G^{^\backepsilon})$. When the context is clear the invariant $\omega(G^{^\backepsilon})$ will be used.

\begin{problem}{\rm 
If possible, determine the eared clique number of the popped Fibonacci-sum set-graph $\omega(G^{F^{^\backepsilon}}_{A^{(n)}})$.
}\end{problem}

\begin{problem}{\rm
If possible, determine the chromatic number of the popped Fibonacci-sum set-graph $G^{F^{^\backepsilon}}_{A^{(n)}}$.
}\end{problem}

\begin{problem}{\rm
If possible, improve on the strict inequality of Proposition \ref{Prop-2.9} for $n\geq 2$.
}\end{problem}

Beyond the limited study reported in this paper, the new notion of Fibonacci-sum set-graphs offers a wide scope for further research. Similar integer sequence-sum set-graphs such as the Lucas-sum set-graphs offer an interesting new direction for research.

In particular, the idea of the set-graph of a graph $G$ of order $n$ comes to the fore. Let $A^{(n)}=V(G)=\{v_i:1\leq i \leq n\}$ and define the set-graph vertices as is understood from Definition \ref{Defn-2.1}. The set-graph of a graph $G$, denoted by $G_{V(G)}$, has edges of the form $\{v_{s,i}v_{t,j}\}$ for all $(v_{i'},v_{j'}),v_{i'}\in A^{(n)}_{s,i}$  and $v_{j'}\in A^{(n)}_{t,j}, v_{i'} \neq v_{j'}$ and edge $v_{i'}v_{j'} \in E(G)\}$. So, loops and multiple edges are permitted. The study of the set-graph of a graph is open.


\begin{thebibliography}{99}

\bibitem{AGL} A. Arman, D.S. Gunderson and P.C. Li, Properties of the Fibonacci-sum graph, \textit{Preprint}, arXiv:1710.10303v1, [math CO].

\bibitem{BM1} J.A. Bondy and  U.S.R. Murty, (2008). \textit{Graph theory}, Springer, New York. 

\bibitem{FKM} K. Fox, W.B. Kinnersley, D. McDonald, N. Orlow and G.J. Puleo, (2014). Spanning paths in Fibonacci-sum graphs, \textit{Fibonacci Quart.}, \textbf{52},46--49.

\bibitem{FH} F. Harary, (2001). \textit{Graph theory}, Narosa Publ. House, New Delhi.

\bibitem{KCSS} J. Kok, K.P. Chithra, N.K. Sudev and C. Susanth, (2015). A Study on Set-Graphs, Int. J. Computer Appl., Vol 118(7), 1--5.

\bibitem{KS1} J. Kok and N.K. Sudev, On the rainbow neighbourhood numbers of set-graphs, \textit{Communicated}.

\bibitem{DBW} D.B. West, (2001). \textit{Introduction to graph theory}, Prentice-Hall of India, New Delhi. 
\end{thebibliography}
\end{document}